\newtheorem{theorem}{Theorem}
\newtheorem{proposition}{Proposition}
\newtheorem{lemma}{Lemma}
\newtheorem{question}{Question}
\begin{document}\title[A note on Hilbert 16th Problem]{A note on Hilbert 16th Problem}
	
\author[Armengol Gasull and Paulo Santana]
{Armengol Gasull$^1$ and Paulo Santana$^2$}
	
\address{$^1$ Departament de Matem\`{a}tiques, Facultat de Ci\`{e}ncies, Universitat Aut\`{o}noma de Barcelona, 08193 Bellaterra, Barcelona, Spain ; and Centre de Recerca Matem\`{a}tica, Edifici Cc, Campus de Bellaterra, 08193 Cerdanyola del Vall\`{e}s (Barcelona), Spain}
\email{armengol.gasull@uab.cat}
	
\address{$^2$ IBILCE--UNESP, CEP 15054--000, S. J. Rio Preto, S\~ao Paulo, Brazil}
\email{paulo.santana@unesp.br}
	
\subjclass[2020]{Primary: 34C07.}
	
\keywords{Hilbert 16th problem; limit cycles; structurally stable vector fields}
	
\begin{abstract}
	Let $\mathcal{H}(n)$ be the maximum number of limit cycles that a planar polynomial vector field of degree $n$ can have. In this paper we prove that $\mathcal{H}(n)$ is realizable by structurally stable vector fields with only hyperbolic limit cycles and that it is a strictly increasing function whenever it is finite.
\end{abstract}
	
\maketitle

\section{Introduction and statement of the main results}

Consider the planar polynomial system of differential equations $X=(P,Q)$ given by
\begin{equation}\label{0}
	\dot x= P(x,y), \quad \dot y=Q(x,y),
\end{equation}
where the dot means the derivative in relation to the independent variable $t$ and $P$, $Q\colon\mathbb{R}^2\to\mathbb{R}$ are polynomials. To system \eqref{0} corresponds a polynomial vector field $X=P\frac{\partial}{\partial x}+Q\frac{\partial}{\partial y}$ in the phase plane of the variables $x$ and $y$. In this paper we make no distinction between system \eqref{0} and its respective vector field. The degree of $X$ is the maximum of the degrees of $P$ and $Q$. Given $n\in\mathbb{N}$, let $\mathcal{X}^n$ be the set of the planar polynomial systems \eqref{0} of degree $n$, endowed with the coefficients topology. Given $X\in\mathcal{X}^n$, let $\pi(X)\in\mathbb{Z}_{\geqslant0}\cup\{\infty\}$ be its number of \emph{limit cycles} (i.e. isolated periodic orbits). 

In his famous address to the International Congress of Mathematicians in Paris 1900, David Hilbert raised his famous list of problems for the $20$th century \cite{Browder}, with the second part of the $16$th problem being about the limit cycles of planar polynomial vector fields. Hilbert asks if there is a uniform upper bound for the number of limit cycles of polynomial vector fields of degree $n$. More precisely, given $n\in\mathbb{N}$ let $\mathcal{H}(n)\in\mathbb{Z}_{\geqslant0}\cup\{\infty\}$ be given by,
	\[\mathcal{H}(n)=\sup\{\pi(X)\colon X\in\mathcal{X}^n\}.\]
Under this notation the second part of Hilbert's $16$th problem consists in obtaining an upper bound for $\mathcal{H}(n)$ and it is yet an open problem. Even for the quadratic case, it is not known if $\mathcal{H}(2)<\infty$. However, advances has been made and lower bounds for $\mathcal{H}(n)$ have been found. For small values of $n$, the best lower bounds so far are $\mathcal{H}(2)\geqslant 4$ \cites{ChenWang1979,Son1980}, $\mathcal{H}(3)\geqslant 13$ \cite{LiLiuYang2009} and $\mathcal{H}(4)\geqslant 28$ \cite{ProTor2019}. In general, it is known that $\mathcal{H}(n)$ increases at least as fast as $O(n^2\ln n)$ \cite{ChrLlo1995,HanLi2012,Lijibin2003}. However, although the known lower bounds are given by  strictly increasing functions, this does not imply that $\mathcal{H}(n)$ itself is strictly increasing. In our first main result we prove this fact.

\begin{theorem}\label{Main1}
	Given $n\in\mathbb{N}$, it holds $\mathcal{H}(n+1)\geqslant\mathcal{H}(n)+1$.
\end{theorem}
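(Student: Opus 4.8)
The plan is to prove the slightly stronger statement that from any $X\in\mathcal{X}^n$ with $\pi(X)=k$ one can manufacture a $Y\in\mathcal{X}^{n+1}$ with $\pi(Y)\geq k+1$; taking the supremum over $X$ then gives the theorem, with the convention $\infty+1=\infty$. A preliminary reduction makes this precise. When $\mathcal{H}(n)<\infty$, the set $\{\pi(X):X\in\mathcal{X}^n\}$ is a bounded set of non-negative integers, so the supremum is attained by some $X_0$ with $\pi(X_0)=\mathcal{H}(n)$; when $\mathcal{H}(n)=\infty$ one instead runs the construction on fields realizing arbitrarily large $k$. In either case it suffices to gain one limit cycle while raising the degree by exactly one.

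First I would arrange that all $k$ limit cycles of the starting field are hyperbolic and confined to a fixed disk $D_R=\{x^2+y^2<R^2\}$. This is precisely the realizability by fields with only hyperbolic limit cycles that we also establish, and hyperbolicity is what guarantees that the $k$ cycles survive a sufficiently small $C^1$ perturbation, each remaining a single limit cycle. After rescaling the coordinates I may keep $R$ fixed while the new cycle is produced at a large scale $\Lambda\gg R$.

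The creation step is where the extra unit of degree is spent. I would add to $X$ a homogeneous term of degree $n+1$ carrying a small coefficient of size comparable to $\Lambda^{-1}$, chosen so that on $D_R$ the perturbation has size $O(\Lambda^{-1})$ and is therefore harmless to the $k$ hyperbolic cycles, whereas on the scale $r\sim\Lambda$ it becomes comparable to $X$ and organizes the large-scale flow into a trapping region bounded by two closed curves surrounding $D_R$, one crossed inward and one crossed outward, with no equilibria in between. Poincar\'e--Bendixson then yields a periodic orbit in this annulus, i.e.\ a new limit cycle encircling all $k$ old ones, so $\pi(Y)\geq k+1$, and including a nonzero top-degree term makes $\deg Y=n+1$ exactly.

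The hard part will be the creation step, specifically forcing the large-scale trapping region to exist. The outer transversality can be imposed by taking the degree-$(n+1)$ term to be radially contracting, but this needs its radial profile, a homogeneous polynomial of degree $n+2$, to be sign-definite, which occurs only when $n$ is even; for $n$ odd some directions at infinity are expanding and no surrounding annulus exists, so the odd case requires a separate device (for instance placing the new cycle to one side rather than around everything, or first multiplying $X$ by a linear factor, which keeps the old cycles but switches the parity of the behaviour at infinity). Equally delicate is the inner boundary, whose transversality is governed by the given field $X$ and is not directly under our control; securing it, uniformly in both parities, is the technical heart of the argument.
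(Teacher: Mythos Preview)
Your preliminary reduction is exactly right and matches the paper: use the realizability result to assume the starting field has $k$ hyperbolic limit cycles inside a fixed ball, so that they survive any small enough $C^1$ perturbation. The divergence comes at the creation step, and there your proposal is not a proof but a plan with gaps you yourself flag. The parity obstruction for odd $n$ is genuine: a homogeneous vector field of even degree $n+1$ cannot be radially contracting on the whole circle, so no surrounding outer barrier exists. More seriously, the inner transversality is entirely governed by the given $X$ on an intermediate sphere and you offer no mechanism to force it; for a generic $X$ there will be directions along which orbits enter $D_R$ from outside, and no small degree-$(n+1)$ addition fixes that at intermediate scales. So as written the construction does not close.

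The paper's device sidesteps both issues. Instead of building a large cycle at infinity, it spends the extra degree on a \emph{linear} factor: pick a regular point $p$ outside the ball $B$ containing the $k$ cycles, with the tangent line $\ell$ through $p$ in the direction $X(p)$ disjoint from $B$ (a small perturbation arranges this), translate $p$ to the origin, and set $Y=(ax+by)X$ with $(a,b)$ normal to $X(0)$. Then $Y\in\mathcal{X}^{n+1}$, the zero line $\ell$ misses $B$ so the $k$ hyperbolic cycles persist unchanged, and the origin is now a degenerate singularity with $\det DY(0)=\operatorname{Tr}DY(0)=0$. A tiny perturbation of the two linear factors makes the origin an isolated monodromic point with positive determinant and zero trace; a further tiny perturbation makes the first Lyapunov coefficient nonzero, and a Hopf bifurcation produces the $(k+1)$st limit cycle near the origin while the $k$ hyperbolic ones remain. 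No parity dichotomy, no trapping annulus, no control over $X$ at large radius is needed. Your parenthetical remark about ``first multiplying $X$ by a linear factor'' is in fact the whole idea; developing it, rather than the annulus-at-infinity route, is what makes the argument go through.
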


In particular, it follows from Theorem~\ref{Main1} that if $\mathcal{H}(n_0)=\infty$ for some $n_0\in\mathbb{N}$, then $\mathcal{H}(n)=\infty$ for every $n\geqslant n_0$. 

The proof of Theorem~\ref{Main1} is essentially a consequence from the fact that given $X\in\mathcal{X}^n$, we can embed $X$ into $\mathcal{X}^{n+1}$ and bifurcate one more limit cycle, while the others persist. This persistence follows from our second main result. To state it properly  we will remind  the notion of \emph{structural stability} and comment on its particularities when it is restricted to the polynomial case.

Roughly speaking, a smooth vector field is structurally stable if small perturbations do not change the topological character of its orbits. The hallmark work on this area is due to Peixoto~\cite{Pei1962} and his characterization theorem, which states that a $C^1$-vector field on a closed (i.e. compact and without boundary) two dimensional manifold is structurally stable if, and only if, the following statements hold.
\begin{enumerate}[label=(\alph*)]
	\item It has at most a finite number of singularities, all hyperbolic.
	\item It has at most a finite number of periodic orbits, all hyperbolic.
	\item It does not have saddle connections.
\end{enumerate}
Moreover, the family of structurally stable vector fields is open and dense in the set of all $C^1$-vector fields. For the structural stability of \emph{polynomial} vector fields endowed with the coefficients topology there are two main characterizations, given by Sotomayor \cite{Soto1985} and Shafer \cite{Sha1987}. The former defines structural stability of $X\in\mathcal{X}^n$ as the structural stability of its Poincar\'e compactification. The latter does not make use of this embedding and thus deals with new objects, such as saddles at infi\-ni\-ty. Hence, they obtained different sets of necessary and sufficient conditions for structural stability. Yet, there are many similarities. Let $X\in\mathcal{X}^n$. In both cases for $X$ to be structurally stable, statements $(a)$ and $(c)$ above are necessary and also the following weak version of statement $(b)$.
\begin{enumerate}
	\item[$(b')$] It has at most a finite number of periodic orbits, none of even multiplicity.
\end{enumerate}
So far it is not known if non-hyperbolic limit cycles of odd multiplicity are possible for a structurally stable vector field in the polynomial world. More precisely, there is the following open question.

\begin{question}[\cite{Soto1985,Sha1987}]\label{Q1}
	If $X\in\mathcal{X}^n$ has a non-hyperbolic limit cycle of odd multiplicity, then is $X$ structurally unstable in $\mathcal{X}^n$?
\end{question} 

Question~\ref{Q1} was explicitly raised by Sotomayor \cite[Problem $1.1$]{Soto1985} and Shafer \cite[Question $3.4$]{Sha1987} and kept both of them from obtaining necessary \emph{and} sufficient conditions for structural stability in $\mathcal{X}^n$. For more details, we refer to \cite{Santana}. Another important similarity between both works is the fact that structural stability is a generic property. That is, if we let $\Sigma^n\subset\mathcal{X}^n$ be the family of the structurally stable elements, then $\Sigma^n$ is open and dense, independently of the two approaches. Therefore, from now on we denote by $\Sigma^n$ the set of structurally stable vector fields of degree $n$ under either one of these two definitions.

Let $\Sigma^n_h\subset\Sigma^n$ be the family of structurally stable vector fields such that all their limit cycles are hyperbolic. In our second main result we prove that $\mathcal{H}(n)$ is realizable by the elements of this family.

\begin{theorem}\label{Main2}	
	For $n\in\mathbb{N}$, the following statements hold.
	\begin{enumerate}[label=(\alph*)]
		\item If $\mathcal{H}(n)<\infty$, then there is $X\in\Sigma_h^n$ such that $\pi(X)=\mathcal{H}(n)$.
		\item If $\mathcal{H}(n)=\infty$, then for each $k\in\mathbb{N}$ there is $X_k\in\Sigma_h^n$ such that $\pi(X_k)\geqslant k$.
	\end{enumerate}
\end{theorem}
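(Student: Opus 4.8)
The plan is to realize the bound by a field whose cycles have all been forced to be hyperbolic by a perturbation staying inside $\mathcal{X}^n$, with the count controlled not by any delicate independent tuning but by the global constraint $\mathcal{H}(n)$ itself. For statement (a), since $\pi$ is integer valued and $\mathcal{H}(n)<\infty$, the supremum defining $\mathcal{H}(n)$ is attained; so I fix $X_0\in\mathcal{X}^n$ with $\pi(X_0)=\mathcal{H}(n)=:r$ and limit cycles $\gamma_1,\dots,\gamma_r$ of multiplicities $m_1,\dots,m_r$. On a transversal at $\gamma_j$ the displacement map of $X_0+\varepsilon Y$ has the form $d_j(s,\varepsilon)=c_j s^{m_j}(1+o(1))+\varepsilon M_j(Y)+O(\varepsilon^2)$ with $c_j\neq0$, where $Y\mapsto M_j(Y)$ is the first Melnikov functional along $\gamma_j$, which is linear in $Y$ and not identically zero. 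Hence I first choose a single $Y\in\mathcal{X}^n$ lying outside the finitely many hyperplanes $\{M_j=0\}$, so that $M_j(Y)\neq0$ for every $j$ at once.

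The decisive step is to examine simultaneously $X_0+\varepsilon Y$ and $X_0-\varepsilon Y$ for small $\varepsilon>0$. For $\varepsilon$ small the zeros of $d_j(\cdot,\varepsilon)$ near $\gamma_j$ are dictated by $c_j s^{m_j}+\varepsilon M_j(Y)$, a nonzero constant plus a pure power; this produces exactly one simple (hence hyperbolic) cycle when $m_j$ is odd, and exactly two or zero hyperbolic cycles when $m_j$ is even, the ``two'' and the ``zero'' occurring on opposite signs of $\varepsilon$. Thus each $\gamma_j$ contributes exactly $2$ to the combined total $\pi(X_0+\varepsilon Y)+\pi(X_0-\varepsilon Y)$, so this sum is $\geqslant2r$. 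Since both summands are $\leqslant\mathcal{H}(n)=r$, equality must hold throughout: no cycle is created away from the $\gamma_j$, and each of $X_0\pm\varepsilon Y$ has exactly $r$ limit cycles, all hyperbolic.

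Writing $X_1$ for either of these fields, I use the density of $\Sigma^n$ to perturb $X_1$ to some $X_2\in\Sigma^n$; the $r$ hyperbolic cycles persist, while maximality forbids any more, so $X_2$ has exactly $r$ cycles, all hyperbolic, i.e. $X_2\in\Sigma_h^n$ with $\pi(X_2)=\mathcal{H}(n)$, proving (a). For (b), fix $k$; since an individual polynomial vector field has only finitely many limit cycles, $\mathcal{H}(n)=\infty$ supplies $X_0$ with $k\leqslant r:=\pi(X_0)<\infty$. The same two-sided construction again gives combined count $\geqslant2r$ (there is now no upper bound, but extra cycles only help), so one of $X_0\pm\varepsilon Y$, say $X_1$, has at least $r\geqslant k$ hyperbolic cycles near $\gamma_1,\dots,\gamma_r$. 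Perturbing $X_1$ into $\Sigma^n$ and then, inside this open set, into the dense subset $\Sigma_h^n$ (dense because along any cycle the stability can be rendered nonzero by a generic degree-$n$ perturbation), these $r$ cycles survive, producing $X_k\in\Sigma_h^n$ with $\pi(X_k)\geqslant k$.

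The most delicate part I foresee is the local analysis underlying the two-sided count: verifying that a perturbation already available \emph{within} $\mathcal{X}^n$ produces a nonzero first Melnikov value on every cycle, and that for each fixed small $\varepsilon$ the number of cycles near $\gamma_j$ is \emph{exactly} the one read off from the leading terms, so that the two halves add up to $2r$ precisely. What lets the argument go through with no finer, cycle-by-cycle control is exactly the interplay of this elementary two-sided count with the global constraint carried by $\mathcal{H}(n)$: in (a) maximality simultaneously rules out stray cycles and lost cycles, whereas in (b) only the trivial lower bound is required.
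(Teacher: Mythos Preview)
Your strategy for (a) is essentially the paper's: perturb in two opposite directions, observe that each original cycle contributes at least two hyperbolic cycles to the combined count, and then use the maximality $\pi(\cdot)\leqslant\mathcal{H}(n)$ as a pigeonhole to force both perturbed fields to have exactly $\mathcal{H}(n)$ cycles, all hyperbolic. The paper implements this with the family of \emph{rotated vector fields} $X_\alpha=(P\cos\alpha-Q\sin\alpha,\;Q\cos\alpha+P\sin\alpha)$ rather than a generic $Y\in\mathcal{X}^n$: since $X_\alpha\wedge\partial_\alpha X_\alpha=P^2+Q^2>0$ on any cycle, the first-order (Melnikov) term is automatically nonzero along every cycle simultaneously, so no hyperplane-avoidance step is needed. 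The paper also supplies a self-contained proof (their Proposition~1, via the implicit function theorem and analyticity of the displacement) of precisely the point you flag as most delicate, namely that all cycles bifurcating for small $|\alpha|>0$ are hyperbolic; your leading-order model $c_js^{m_j}+\varepsilon M_j$ is the right picture but requires such an argument to control the mixed higher-order terms.

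There is one substantive discrepancy. In part~(b) you invoke ``an individual polynomial vector field has only finitely many limit cycles,'' i.e.\ the Dulac--Il'yashenko--\'Ecalle finiteness theorem, which the paper explicitly declines to rely on (they note that its proofs are currently under discussion). Their route instead allows for $\pi(X)=\infty$ and extracts finite subconfigurations before rotating. Likewise, your appeal to $\Sigma_h^n$ being dense in $\Sigma^n$ is not something the paper asserts; instead they take $W_k\in\Sigma^n$, rotate it slightly so that (by their Proposition~1) every cycle becomes hyperbolic, and use the structural stability of $W_k$ together with openness of $\Sigma^n$ to guarantee that the rotation stays in $\Sigma^n$ and neither creates nor destroys cycles.
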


Finally, due to its relation with the possible case of $\mathcal{H}(n)=\infty$, we also include at the end of this note a proof for the following folklore result: \emph{a planar analytic vector field has an enumerable number of limit cycles.}

The paper is organized as follows. In Section~\ref{Sec2} we recall some properties of \emph{rotated vector fields} and prove how they can be used to transform non-hyperbolic limit cycles in hyperbolic ones. The main theorems are proved in Section~\ref{Sec3}. In Section~\ref{Sec4} we prove the folklore result and provide some further remarks.

\section{Rotated vector fields}\label{Sec2}

Given a planar polynomial vector field $X=(P,Q)$, let $X_\alpha=(P_\alpha,Q_\alpha)$ be the one-parameter family given by
\begin{equation}\label{rot}
	P_\alpha=P\cos\alpha-Q\sin\alpha, \quad Q_\alpha= Q\cos\alpha+P\sin\alpha,
\end{equation}
with $\alpha\in\mathbb{R}$. Observe that $X_0=X$ and that $X_\alpha$ defines a \emph{completed family of rotated vector fields}, see Duff \cite{Duff}. Throughout out this paper, $X_\alpha$ will always denote the family given by \eqref{rot}.

In his seminal work Duff \cite{Duff} studied the properties of $X_\alpha.$ In particular, he proved the following result that we simply state for family \eqref{rot}, but that holds for more general $1$-parametric families of $\mathcal{C}^1$ vector fields.

\begin{theorem}[\cite{Duff}]\label{T1}
	Let $X_\alpha$ be the family of rotated vector fields \eqref{rot} and suppose that $X_{\alpha_0}$ has a limit cycle $\gamma_{\alpha_0}$. Then:
	\begin{enumerate}[label=(\alph*)]
		\item If $\gamma_{\alpha_0}$ has odd multiplicity, then it is persistent for $|\alpha-\alpha_0|$ small and it either contracts or expands monotonically as $\alpha$ varies in a certain sense. 
		\item If $\gamma_{\alpha_0}$ has even multiplicity, then for $|\alpha-\alpha_0|$ small it splits in two limit cycles, one stable and the other unstable, as $\alpha$ varies in a certain sense. If $\alpha$ varies in the opposite sense, then $\gamma_{\alpha_0}$ disappears and no other limit cycles appear in its neighborhood.
	\end{enumerate}
\end{theorem}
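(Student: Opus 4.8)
The plan is to reduce both statements to the study of a single \emph{displacement function}. Fix a point $p_0\in\gamma_{\alpha_0}$ and a short analytic transversal segment $\Sigma$ through $p_0$, parametrized by signed arc length $s$ with $s=0$ at $p_0$. For $\alpha$ near $\alpha_0$ and $|s|$ small the first return map $\mathcal{P}(s,\alpha)$ of $X_\alpha$ along $\Sigma$ is well defined, and I set $d(s,\alpha)=\mathcal{P}(s,\alpha)-s$. Zeros of $d(\cdot,\alpha)$ correspond to periodic orbits of $X_\alpha$ meeting $\Sigma$ near $p_0$, and the multiplicity of such a cycle is, by definition, the order of the corresponding zero; thus $\gamma_{\alpha_0}$ has odd (respectively even) multiplicity exactly when $s=0$ is a zero of odd (respectively even) order of $d(\cdot,\alpha_0)$.

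Everything rests on the defining property of a rotated family. Differentiating \eqref{rot} in $\alpha$ gives $\partial_\alpha P_\alpha=-Q_\alpha$ and $\partial_\alpha Q_\alpha=P_\alpha$, whence
\begin{equation*}
P_\alpha\,\partial_\alpha Q_\alpha-Q_\alpha\,\partial_\alpha P_\alpha=P_\alpha^2+Q_\alpha^2,
\end{equation*}
which is strictly positive away from the singularities of $X_\alpha$. Geometrically, as $\alpha$ increases the vector $X_\alpha(q)$ turns strictly monotonically at every regular point $q$; since $\gamma_{\alpha_0}$ contains no singularity, for $\alpha\ne\alpha_0$ close to $\alpha_0$ the field $X_\alpha$ is transverse to $\gamma_{\alpha_0}$ and points to the same side of it along the whole curve. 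Hence $\gamma_{\alpha_0}$ is a curve without contact for such $X_\alpha$, and this is the geometric heart of the proof: it forces the non-degeneracy $\partial_\alpha d(0,\alpha_0)\ne0$. I would establish this either directly from the non-contact property, which already yields a strictly signed return displacement, or through the first-variation (Melnikov) formula for the Poincar\'e map, which gives, up to a positive factor,
\begin{equation*}
\partial_\alpha d(0,\alpha_0)=\kappa\int_0^{T}\big(P_{\alpha_0}^2+Q_{\alpha_0}^2\big)\big(\gamma_{\alpha_0}(t)\big)\,\exp\!\Big(-\!\int_0^{t}\operatorname{div}X_{\alpha_0}\,d\tau\Big)\,dt,
\end{equation*}
where $T$ is the period of $\gamma_{\alpha_0}$ and $\kappa\ne0$ has a definite sign; the integrand is strictly positive, so $\partial_\alpha d(0,\alpha_0)\ne0$. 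The same monotone-turning property shows that cycles of $X_{\alpha_1}$ and $X_{\alpha_2}$ with $\alpha_1\ne\alpha_2$ cannot meet, which underlies the ordered nesting and the global monotone contraction/expansion asserted in the statement.

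With $\partial_\alpha d(0,\alpha_0)\ne0$ secured, write $b=\partial_\alpha d(0,\alpha_0)$, let $m$ be the multiplicity, and let $c\ne0$ be the leading coefficient in $s$, so that near $(0,\alpha_0)$
\begin{equation*}
d(s,\alpha)=b\,(\alpha-\alpha_0)+c\,s^{m}+\cdots.
\end{equation*}
For part (a) the integer $m$ is odd. The implicit function theorem applied to $d=0$ in the $\alpha$-direction (legitimate since $b\ne0$) yields a function $\alpha=A(s)$ with $A(0)=\alpha_0$ whose graph is exactly the local zero set of $d$; the odd-order term $c\,s^{m}$ forces $A$ to be strictly monotone (although $A'(0)=0$ when $m>1$). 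Inverting $A$ gives, for each $\alpha$ near $\alpha_0$, a unique nearby zero $s(\alpha)$, hence a persistent limit cycle, and the monotonicity of $A$ is precisely the assertion that this cycle contracts or expands monotonically with $\alpha$. For part (b) the integer $m$ is even, so $d(\cdot,\alpha_0)$ does not change sign; combined with the definite sign of $b$, the local model shows that when $b(\alpha-\alpha_0)$ and $c$ have opposite signs the equation $d=0$ has two simple zeros $s=\pm\big(-b(\alpha-\alpha_0)/c\big)^{1/m}+\cdots$, at which $\partial_s d$ takes opposite signs (one stable, one unstable cycle), whereas for $b(\alpha-\alpha_0)$ of the opposite sign $d$ is of constant sign and no zero survives, so $\gamma_{\alpha_0}$ disappears with no cycle nearby.

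The main obstacle is the rigorous passage from the pointwise identity $P_\alpha^2+Q_\alpha^2>0$ to the global non-degeneracy $\partial_\alpha d(0,\alpha_0)\ne0$, and, hand in hand with it, a clean identification of the analytic order of the zero of $d(\cdot,\alpha_0)$ with the geometric multiplicity of $\gamma_{\alpha_0}$. Once that non-degeneracy is in place, each case is an elementary discussion of the zeros of a one-variable function depending on a parameter; the only delicate bookkeeping there is tracking signs so as to read off stability in part (b) and the monotone direction of motion in part (a).
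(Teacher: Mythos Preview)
The paper does not prove Theorem~\ref{T1}: it is quoted from Duff and stated without proof. What the paper does prove is the companion Proposition~\ref{P1} on the \emph{hyperbolicity} of the cycles that Theorem~\ref{T1} produces, and your proposal actually mirrors that argument closely: you set up the displacement map on a transversal, compute $X_\alpha\wedge\partial_\alpha X_\alpha=P_\alpha^2+Q_\alpha^2$, invoke the Perko first-variation formula to obtain $\partial_\alpha d(0,\alpha_0)\ne0$, and then apply the implicit function theorem in the $\alpha$-direction to get $\alpha=A(s)$. The paper uses exactly these ingredients for Proposition~\ref{P1}; your extra step is the parity analysis of $m$ (the order of the zero of $d(\cdot,\alpha_0)$) to split into the odd and even cases, which is precisely what is needed to recover Duff's statement from this setup and which the paper simply takes as known.

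Your sketch is essentially correct. The one place to be careful is the even case: the assertion that for one sign of $\alpha-\alpha_0$ \emph{exactly two} cycles appear and for the other sign \emph{none} does not follow from the truncated model $b(\alpha-\alpha_0)+cs^m$ alone; it requires controlling the full remainder, which is most cleanly done (in the analytic setting) via the monotonicity of $A(s)$ on each side of $0$, obtained from $A'(s)=-\partial_s d/\partial_\alpha d$ and the fact that $\partial_s d$ has the isolated zero $s=0$ of order $m-1$. You flag this as ``the main obstacle,'' and once $\partial_\alpha d(0,\alpha_0)\ne0$ is in hand it is straightforward.
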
 
We observe that Theorem~\ref{T1} does not provide information about the hyperbolicity of the limit cycles involved. However, it follows from Andronov et al \cite[Theorems~$71$\&$72$]{Andronov} that this information can be given in the analytic case. For sake of simplicity and for the paper to be self-contained, we provide a proof of a simple version of such theorems, sufficient for our goals.

\begin{proposition}\label{P1} Let $X_\alpha$ be the family of rotated vector fields \eqref{rot} and suppose that $X_{\alpha_0}$ has a limit cycle $\gamma_{\alpha_0}$. Then, for $|\alpha-\alpha_0|>0$ small enough, all the limit cycles detailed in Theorem~\ref{T1}   that bifurcate from $\gamma_{\alpha_0}$ are hyperbolic.
\end{proposition}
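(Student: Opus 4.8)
The plan is to reduce everything to the analysis of a single analytic displacement function and to exploit the one structural feature that makes \eqref{rot} a \emph{completed family of rotated vector fields}. Differentiating \eqref{rot} with respect to $\alpha$ gives $\partial_\alpha P_\alpha=-Q_\alpha$ and $\partial_\alpha Q_\alpha=P_\alpha$, so $\partial_\alpha X_\alpha$ is $X_\alpha$ rotated by a right angle and consequently
\[
X_\alpha\wedge\partial_\alpha X_\alpha=P_\alpha^2+Q_\alpha^2=|X_\alpha|^2,
\]
which is strictly positive on $\gamma_{\alpha_0}$ because a limit cycle contains no singularities. This positivity is the only analytic input I will need from the rotated structure.

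Next I would fix a transversal section to $\gamma_{\alpha_0}$ with coordinate $s$ (with $s=s_0$ on $\gamma_{\alpha_0}$) and introduce the displacement function $D(s,\alpha)=\Pi_\alpha(s)-s$, where $\Pi_\alpha$ is the Poincar\'e first-return map. Since $P,Q$ are polynomials, the flow of $X_\alpha$ is analytic in $(t,s,\alpha)$, and therefore $D$ is analytic in $(s,\alpha)$; this analyticity is precisely what upgrades Duff's topological conclusions in Theorem~\ref{T1} to hyperbolicity. The limit cycles of $X_\alpha$ near $\gamma_{\alpha_0}$ are exactly the zeros of $D$, and such a cycle is hyperbolic precisely when $\partial_s D\neq0$ there.

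The crux is to show $\partial_\alpha D(s_0,\alpha_0)\neq0$. For this I would invoke the classical first-variation formula
\[
\partial_\alpha D(s_0,\alpha_0)=\frac{1}{|X_{\alpha_0}|}\int_0^{T}\exp\!\left(-\int_0^t \mathrm{div}\,X_{\alpha_0}\,d\tau\right)\big(X_{\alpha_0}\wedge\partial_\alpha X_{\alpha_0}\big)\,dt,
\]
where $T$ is the period of $\gamma_{\alpha_0}$, the prefactor is evaluated at the base point, and all integrals are taken along $\gamma_{\alpha_0}$. The exponential weight is positive and, by the identity of the first paragraph, the remaining factor equals $|X_{\alpha_0}|^2>0$; hence the integrand is strictly positive and $\partial_\alpha D(s_0,\alpha_0)\neq0$. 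I expect this to be the main obstacle, since it requires deriving (or carefully quoting) the first-variation formula for the return map and matching sign conventions; everything else is formal.

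With $\partial_\alpha D(s_0,\alpha_0)\neq0$ in hand I would conclude as follows. The implicit function theorem yields an analytic function $\alpha=g(s)$ with $g(s_0)=\alpha_0$ whose graph is the entire zero set of $D$ near $(s_0,\alpha_0)$, together with a factorization $D(s,\alpha)=(\alpha-g(s))h(s,\alpha)$ in which $h(s_0,\alpha_0)\neq0$, so $h$ is nonvanishing nearby. Setting $\alpha=\alpha_0$ shows that the multiplicity of $\gamma_{\alpha_0}$ equals the order of vanishing of $g(s)-g(s_0)$ at $s_0$, and in particular $g$ is non-constant, since a constant $g$ would force $X_{\alpha_0}$ to possess a period annulus, contradicting that $\gamma_{\alpha_0}$ is isolated. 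Differentiating $D(s,g(s))\equiv0$ gives $\partial_s D(s,g(s))=-\partial_\alpha D\cdot g'(s)$, so the cycle parametrized by $s$ is hyperbolic exactly when $g'(s)\neq0$. Because $g$ is analytic and non-constant, $g'$ has isolated zeros, whence $g'(s)\neq0$ for every $s\neq s_0$ sufficiently close to $s_0$. These values of $s$ are exactly the bifurcating cycles described in Theorem~\ref{T1} (an odd-multiplicity zero makes $g$ locally monotone, giving one persistent cycle, while an even-multiplicity zero gives $g$ a local extremum, accounting for the splitting on one side and disappearance on the other), and all of them are therefore hyperbolic.
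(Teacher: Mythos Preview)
Your proof is correct and follows essentially the same route as the paper's: compute $X_\alpha\wedge\partial_\alpha X_\alpha=P_\alpha^2+Q_\alpha^2>0$, invoke the first-variation formula (which the paper cites as Perko's Lemma~2) to obtain $\partial_\alpha D\neq0$ at the base point, apply the implicit function theorem to get the analytic branch $\alpha=g(s)$, differentiate the identity $D(s,g(s))\equiv0$ to relate $\partial_s D$ to $g'$, and use analyticity plus the isolated-periodic-orbit hypothesis to rule out $g'\equiv0$ and conclude that $g'(s)\neq0$ for $s\neq s_0$ nearby. Your added factorization $D=(\alpha-g(s))h$ and the remark tying the parity of the multiplicity to the local shape of $g$ are nice elaborations, but the argument is the same.
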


\begin{proof} For simplicity, let us assume $\alpha_0=0$. If $\gamma_0$ is hyperbolic, then there is nothing to prove. Hence, suppose that $\gamma_0$ is not hyperbolic. Let $I\subset\mathbb{R}$ be a small neighborhood of $\alpha_0=0$ and $\Sigma$ be a small normal section of $\gamma_{0}$, endowed with a coordinate system $s\in\mathbb{R}$ such that $s=0$ at $p$, where $\{p\}=\gamma_{0}\cap\Sigma$. Let $D\colon I\times\Sigma\to\mathbb{R}$ be its associated displacement map. Since $X_\alpha$ is analytic in $(x,y;\alpha)$, it follows that~$D$ is well defined and analytic. Let $T>0$ be the period of $\gamma_{0}$ and let $\gamma_{0}(t)$ be the parametrization of $\gamma_{0}$ given by the flow of $X_{0}$ and such that $\gamma_{0}(0)=p$. It follows from Perko \cite[Lemma $2$]{Perko1992} that, for some  $C\in\mathbb{R}\backslash\{0\},$
\begin{align}\label{2}
	\frac{\partial D}{\partial\alpha}(0,0)&=C\int_{0}^{T}\left(e^{-\int_{0}^{t}div(\gamma_{0}(\tau))\;d\tau}\right)X_\alpha\land\dfrac{\partial X_{\alpha}}{\partial\alpha}(\gamma_{0}(t);0)\;dt\\\nonumber
	&=C\int_{0}^{T}\left(e^{-\int_{0}^{t}div(\gamma_{0}(\tau))\;d\tau}\right)\big(P^2+Q^2)(\gamma_{0}(t);0)\;dt\ne0.
\end{align}	
Therefore, from the Implicit Function Theorem we have that there is a unique function $\alpha=\alpha(s)$, with $\alpha(0)=0$, such that 
\begin{equation}\label{4}
	D(\alpha(s),s)=0.
\end{equation}
Moreover, since $D$ is analytic, it follows that $\alpha(s)$ is also analytic. Differentiating~\eqref{4} in relation to $s$ we obtain,
\begin{equation}\label{5}
	\frac{\partial D}{\partial\alpha}(\alpha(s),s)\alpha'(s)+\frac{\partial D}{\partial s}(\alpha(s),s)=0.
\end{equation}
From \eqref{2} we have that $\frac{\partial D}{\partial\alpha}(\alpha(s),s)\neq0$ for $|s|$ small. Hence, it follows from \eqref{5} that,
\begin{equation}\label{6}
	\alpha'(s)=-\frac{\partial D/\partial s}{\partial D/\partial\alpha}(\alpha(s),s).
\end{equation}
Since $\gamma_{0}$ is not hyperbolic, it follows that $\frac{\partial D}{\partial s}(0,0)=0$ and thus from \eqref{6} we have $\alpha'(0)=0$.  Since $\alpha'$ is an analytic function, either $0$ is an isolated zero of $\alpha'$ or $\alpha'(s)\equiv0$ (and in particular $\alpha(s)\equiv0$) in a neighborhood of $s=0$. Let us discard this second possibility. In this case, from \eqref{4},  $D(0,s)\equiv0$ for $|s|$ small and thus $\gamma_0$ belongs to a continuous band of periodic orbits, contradicting the definition of limit cycle. Therefore, it follows from \eqref{5} that
\begin{equation*}%\label{7}
	\frac{\partial D}{\partial s}(\alpha(s),s)=-\frac{\partial D}{\partial \alpha}(\alpha(s),s)\alpha'(s)\neq0,
\end{equation*}
for $|s|>0$ small. Hence, any limit cycle of $X_\alpha$ near $\gamma_{0}$ is hyperbolic, for $|\alpha|>0$ small, as we wanted to prove. \end{proof}

We observe that Perko \cite[Theorem $3$]{Perko1992} also provided a similar result about the hyperbo\-licity of the limit cycles considered at Theorem~\ref{T1}$(b)$. For more details about the theory of rotated vector fields and its generalizations, we refer to~Han \cite{Han1999}, Perko \cite[Section $4.6$]{Perko2001} and the references therein.

\section{Proof of the main theorems}\label{Sec3}

Given $X=(P,Q)\in\mathcal{X}^n$, let $\pi_h(X)$ be its number of hyperbolic limit cycles. Observe that in general we have $\pi_h(X)\leqslant\pi(X)$. 

In this paper we also work with the possibility of $\pi(X)=\infty$ for some $X\in\mathcal{X}^n$. We choose to do this because although Il’yashenko~\cite{Shenko2} and \'Ecalle~\cite{Ecalle} independently claimed to have proved that this is impossible, it seems that some of their results start to be under discussion. For instance, in the recent work \cite{Yeung} a possible gap was found in Il’yashenko's proof. Our results are not based on these finiteness results. 

\begin{proposition}\label{P2}
	Let $X\in\mathcal{X}^n$. Then the following statements hold.
	\begin{enumerate}[label=(\alph*)]
		\item If $\pi(X)<\infty$, then there is $Y\in\mathcal{X}^n$ such that $\pi_h(Y)\geqslant\pi(X)$.
		\item If $\pi(X)=\infty$, then for each $k\in\mathbb{N}$ there is $Y_k\in\mathcal{X}^n$ such that $\pi_h(Y_k)\geqslant k$.
	\end{enumerate}
\end{proposition}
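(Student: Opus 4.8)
The plan is to take $Y$ (respectively each $Y_k$) to be a small rotation $X_\alpha$ of $X$ given by \eqref{rot}. This is legitimate because $X_\alpha\in\mathcal{X}^n$ for every $\alpha$: its components are constant-coefficient combinations of $P$ and $Q$, so they have degree at most $n$, and since the rotation acting coefficientwise on the pair of homogeneous degree-$n$ parts of $(P,Q)$ is invertible, it sends the nonzero top part of $X$ to a nonzero top part, so the degree stays exactly $n$. Both statements $(a)$ and $(b)$ will then follow from a single bifurcation analysis carried out on a finite collection of limit cycles of $X$, using Theorem~\ref{T1} and Proposition~\ref{P1}.

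First I would fix the finite collection. In case $(a)$ this is the full set of limit cycles $\gamma_1,\dots,\gamma_m$ of $X$, with $m=\pi(X)$. In case $(b)$, since $\pi(X)=\infty$ there are at least $k$ limit cycles, and given $k$ I would choose $k$ distinct ones $\gamma_1,\dots,\gamma_k$. In both cases the chosen cycles are isolated among periodic orbits and pairwise disjoint closed curves, so they can be enclosed in pairwise disjoint neighborhoods $U_1,\dots$, each containing no other periodic orbit of $X$. Since $X$ is analytic, the displacement map of each $\gamma_i$ is analytic with an isolated zero, so $\gamma_i$ has a well-defined finite multiplicity, and I classify the chosen cycles into $a$ of odd multiplicity and $b$ of even multiplicity.

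Next I would apply the rotated-family dichotomy cycle by cycle. By Theorem~\ref{T1}$(a)$ together with Proposition~\ref{P1}, each odd cycle persists as a single hyperbolic limit cycle for all small $|\alpha|>0$, contributing one cycle. By Theorem~\ref{T1}$(b)$ and Proposition~\ref{P1}, each even cycle splits into two hyperbolic limit cycles when $\alpha$ moves in one sense and disappears when $\alpha$ moves in the opposite sense. For $|\alpha|$ small enough all the bifurcated cycles stay inside the disjoint neighborhoods $U_i$, hence are mutually distinct; as the collection is finite I may choose a single $|\alpha|$ working simultaneously for every $\gamma_i$ and making every surviving cycle hyperbolic.

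The one genuine obstacle is that the \emph{sense} in which an even cycle must be rotated in order to split rather than vanish need not be the same for all cycles, so I cannot expect to split every even cycle at once. This is resolved by a counting argument combined with the freedom to choose the sign of $\alpha$. By Theorem~\ref{T1}$(b)$, for each even cycle exactly one of the two signs produces a split; writing $b_+$ and $b_-$ for the number of even cycles that split for $\alpha>0$ and for $\alpha<0$ respectively, we have $b_++b_-=b$, so $\max\{b_+,b_-\}\geqslant b/2$. Choosing the sign of $\alpha$ attaining this maximum, the number of hyperbolic limit cycles of $X_\alpha$ is at least $a+2\max\{b_+,b_-\}\geqslant a+b$, which equals $\pi(X)=m$ in case $(a)$ and is $\geqslant k$ in case $(b)$. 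Any extra limit cycles created elsewhere only improve the bound, so setting $Y:=X_\alpha$ (respectively $Y_k:=X_\alpha$) finishes the argument.
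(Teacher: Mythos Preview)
Your proposal is correct and follows essentially the same approach as the paper: take $Y=X_\alpha$ from the rotated family~\eqref{rot}, use Theorem~\ref{T1} together with Proposition~\ref{P1} to see that odd-multiplicity cycles persist as hyperbolic while each even-multiplicity cycle splits into two hyperbolic ones for exactly one sign of $\alpha$, and then pick the sign maximizing the number of splits so that $a+2\max\{b_+,b_-\}\geqslant a+b$. The only cosmetic differences are that the paper keeps separate tallies $h$, $m$, $m^\pm$ (whereas you group hyperbolic cycles with the non-hyperbolic odd ones, which is fine since multiplicity~$1$ is odd), and your treatment of case~$(b)$---fixing $k$ cycles and running the same finite argument---is a bit more explicit than the paper's one-line reduction.
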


\begin{proof}Let $X\in\mathcal{X}^n$ and $X_\alpha$ be its respective family of rotated vector fields, given by \eqref{rot}. Let also:
\begin{enumerate}[label=(\roman*)]
	\item $h\in\mathbb{Z}_{\geqslant0}\cup{\infty}$ be the number of hyperbolic limit cycles of $X$;
	\item $m\in\mathbb{Z}_{\geqslant0}\cup{\infty}$ be the number of non-hyperbolic limit cycles $X$ of odd multiplicity;
	\item $m^\pm\in\mathbb{Z}_{\geqslant0}\cup{\infty}$ be the number of non-hyperbolic limit cycles $\gamma$ of $X$ of even multiplicity and such that $\gamma$ bifurcates in two hyperbolic limit cycles for $\pm\alpha>0$ small.	
\end{enumerate}
Observe that $\pi(X)=h+m+m^++m^-$. Suppose first $\pi(X)<\infty$. Without loss of generality, suppose $m^+\geqslant m^-$. It follows from Proposition~\ref{P1} that $X_\alpha$ has at least $h+m+2m^+$ hyperbolic limit cycles for $\alpha>0$ small enough. Hence, if we take $Y=X_\alpha$, then $Y\in\mathcal{X}^n$ and
	\[\pi_h(Y)\geqslant h+n+2m^+\geqslant h+n+m^++m^-=\pi(X).\]
If $\pi(X)=\infty$, then $h$, $m$, $m^+$ or $m^-$ are equal to infinity. In any case we apply the same reasoning on an sequence of vector fields having an increasing number of limit cycles, obtaining  the final desired sequence of vector fields. \end{proof} 

\begin{proof}[Proof of Theorem~\ref{Main2}.]
	
Suppose first $\mathcal{H}(n)<\infty$ and let $Z\in\mathcal{X}^n$ be such that $\pi(Z)=\mathcal{H}(n)$. It follows from Proposition~\ref{P2} that there is $Y\in\mathcal{X}^n$ such that $\pi_h(Y)\geqslant\pi(Z)$. Hence, it follows from the definition of $\mathcal{H}(n)$ that,
	\[\pi(Y)=\pi_h(Y)=\pi(Z)=\mathcal{H}(n).\]
Hence, every limit cycle of $Y$ is hyperbolic and any vector field in $\mathcal{X}^n,$ close enough to $Y,$ has also exactly $\mathcal{H}(n)$ limit cycles, all of them hyperbolic. In particular, there is an arbitrarily small perturbation $X\in\Sigma^n_h$ of $Y$ such that $\pi(X)=\mathcal{H}(n)$. 

Suppose now $\mathcal{H}(n)=\infty$. Observe that there is a sequence $(Z_j)$, with $Z_j\in\mathcal{X}^n$, such that $\pi(Z_j)\to\infty$ and $\pi(Z_j)<\infty$ for every $j\in\mathbb{N}$, or there is $Z\in\mathcal{X}^n$ such that $\pi(Z)=\infty$. In either case it follows from statement $(a)$ or $(b)$ of Proposition~\ref{P2}, respectively, that for each $k\in\mathbb{N}$ there is $Y_k\in\mathcal{X}^n$ such that $\pi_h(Y_k)\geqslant k$. Therefore, for each $k\in\mathbb{N}$ we can take a small enough perturbation $W_k\in\Sigma^n$ of $Y_k$ such that $\pi_h(W_k)\geqslant k$. It follows from the definition of $\Sigma^n$ that $\pi(W_k)<\infty$. Moreover, some of these limit cycles may be non-hyperbolic and with odd multiplicity. Thus, it follows similarly to the proof of Proposition~\ref{P2}, from the structural stability of $W_k$ and from the fact that $\Sigma^n$ is open and dense in $\mathcal{X}^n$, that we can take a small enough rotation $X_k\in\Sigma^n$ of $W_k$ such that the following statements hold.
\begin{enumerate}[label=(\roman*)]
	\item The hyperbolic limit cycles persist.
	\item The non-hyperbolic limit cycles become hyperbolic.
	\item $X_k$ and $W_k$ are topologically equivalent.
\end{enumerate}
In particular, it follows from $(iii)$ that we do not have the bifurcation of new limit cycles and thus we conclude that $X_k\in\Sigma_h^n$ and $\pi(X_k)\geqslant k$. \end{proof}

We now prove a technical lemma that we will need to proof Theorem~\ref{Main1}.

\begin{lemma}\label{L1}
	Let $X\in\mathcal{X}^n$ and $B\subset\mathbb{R}^2$ a closed ball centered at the origin. Then there is an arbitrarily small perturbation $Y$ of $X$ having a regular point $p\in\mathbb{R}^2\backslash B$ such that $\ell\cap B=\emptyset$, where $\ell$ is the straight line  $p+sY(p)$, $s\in\mathbb{R}$.
\end{lemma}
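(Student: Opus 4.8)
The plan is to exploit the behaviour of $Y$ at points far from the origin, where the direction of the field is governed by its top-degree homogeneous part. Let $r$ be the radius of $B$. I would first record the elementary criterion that, for a regular point $p$, the line $\ell=\{p+sY(p):s\in\mathbb{R}\}$ misses $B$ exactly when the distance from the origin to $\ell$ exceeds $r$, and that this distance equals $|p\wedge Y(p)|/|Y(p)|$, where $p\wedge v:=p_1v_2-p_2v_1$. Thus it suffices to produce, after a small perturbation, a regular point $p\notin B$ with $|p\wedge Y(p)|/|Y(p)|>r$.

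I would then test points escaping to infinity along a fixed ray. Writing $P_n,Q_n$ for the degree-$n$ homogeneous parts of $P,Q$ and setting $R(x,y):=xQ_n(x,y)-yP_n(x,y)$ (homogeneous of degree $n+1$), for $p=\rho(\cos\theta_0,\sin\theta_0)$ one has $Y(p)=\rho^n\,Y_n(\cos\theta_0,\sin\theta_0)+O(\rho^{n-1})$ and $p\wedge Y(p)=\rho^{n+1}R(\cos\theta_0,\sin\theta_0)+O(\rho^n)$, whence
\[
\frac{|p\wedge Y(p)|}{|Y(p)|}=\rho\,\frac{|R(\cos\theta_0,\sin\theta_0)|}{|Y_n(\cos\theta_0,\sin\theta_0)|}\bigl(1+O(1/\rho)\bigr).
\]
So whenever $R(\cos\theta_0,\sin\theta_0)\neq0$ — which forces $Y_n(\cos\theta_0,\sin\theta_0)\neq0$, hence $p$ regular for $\rho$ large — this distance tends to $+\infty$, and a large enough $\rho$ makes it exceed $r$ while simultaneously placing $p$ outside $B$.

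What remains, and what I expect to be the crux, is to secure $R\not\equiv0$ by an arbitrarily small perturbation. If $R\not\equiv0$ already for $X$, I simply choose $\theta_0$ outside the finite zero set of $\theta\mapsto R(\cos\theta,\sin\theta)$ and take $Y=X$. The genuine obstacle is the degenerate case $R\equiv0$, equivalent to $xQ_n=yP_n$, i.e. $P_n=xA$, $Q_n=yA$ for some homogeneous $A$ of degree $n-1$: here the leading field is purely radial (as for $X=(x,y)$), every far-away line heads back toward the origin, and no unperturbed point can succeed. To break this I would pass to $Y=(P,\,Q+\varepsilon y^n)\in\mathcal{X}^n$, which is arbitrarily close to $X$ and has $R_Y=R+\varepsilon xy^n=\varepsilon xy^n\not\equiv0$. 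The second paragraph then applies to $Y$; the only cost of the smallness of $\varepsilon$ is that the tangential component at infinity is of order $\varepsilon$, forcing $\rho$ of order $r/\varepsilon$, which is still finite. The single point needing care throughout is that a nonzero value of $R$ at $\theta_0$ simultaneously certifies the regularity of $p$ and the escape of the distance to $+\infty$.
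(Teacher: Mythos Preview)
Your argument is correct and follows the same strategy as the paper: send $p$ to infinity along a ray so that the direction of $Y(p)$ is governed by the top-degree homogeneous part $(P_n,Q_n)$, and perturb the leading coefficients if necessary to make this direction non-radial. The paper works along the positive $x$-axis and compares the angle $\arctan\bigl(Q(x,0)/P(x,0)\bigr)$ with the tangent angle from $p$ to $B$, whereas you use the equivalent distance criterion $|p\wedge Y(p)|/|Y(p)|>r$; a small bonus of your formulation is that regularity of $p$ follows directly from $Y_n(\cos\theta_0,\sin\theta_0)\neq 0$, so you bypass the paper's preliminary appeal to Shafer's theorem to first reduce to finitely many singularities.
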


\begin{proof} It follows from Shafer \cite[Theorem~$3.2$]{Sha1987} that we can take an arbitrarily small perturbation $Y\in\mathcal{X}^n$ of $X$ such that $Y$ has at most a finite number of singularities. Let $Y=(P,Q)$ and let $P_i$ and $Q_i$, $i\in\{0,\dots,n\}$, be homogeneous polynomials of degree~$i$ such that $P=P_0+\dots+P_n$ and $Q=Q_0+\dots+Q_n$. Replacing $Y$ by an arbitrarily small perturbation if necessary, we can also suppose $P_n(1,0)Q_n(1,0)\neq0$. Let $p=(x,0)$, $x>0$. Since $Y$ has at most a finite number of singularities, there is $x_0>0$ such that if $x>x_0$, then $p$ is a regular point of $Y$. Let $\ell^+$ and $\ell^-$ be the two straight lines tangents to $B$ and passing through $p$. Let $\theta=\theta(x)$ be the angle between $\ell^\pm$ and the $x$-axis and observe that,
\begin{equation*}
	\lim\limits_{x\to\infty}\theta(x)=0.
\end{equation*} 
Let also $\varphi=\varphi(x)$ be the angle between $\ell$ and the $x$-axis, which is given by
\begin{equation*}
	\varphi(x)=\arctan\frac{Q(x,0)}{P(x,0)},
\end{equation*}
see Figure~\ref{Fig1}. 
\begin{figure}[t]
	\begin{center}
		\begin{overpic}[width=10cm]{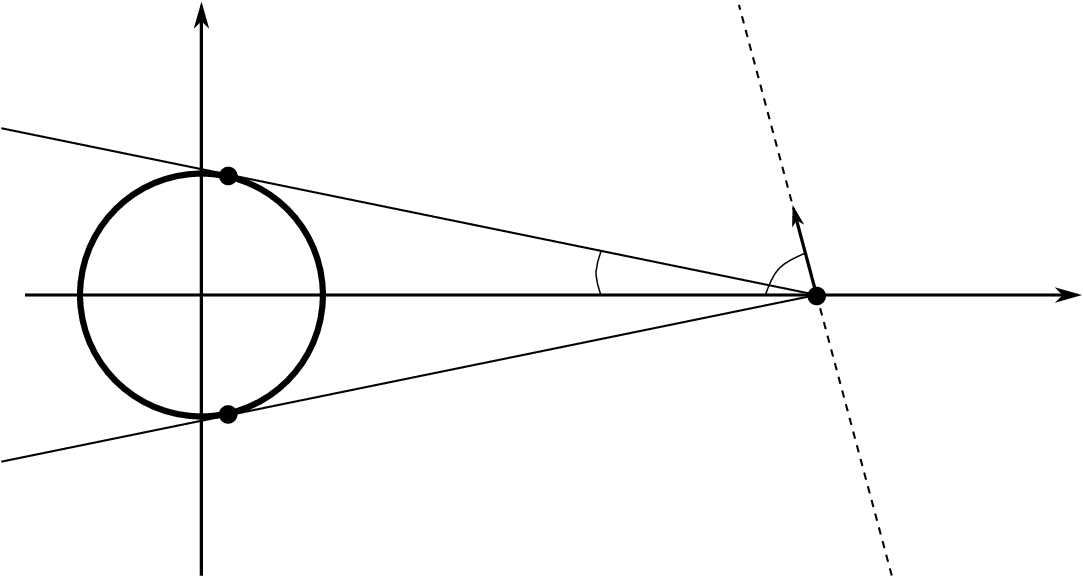} 
		%\begin{overpic}[width=10cm,grid,tics=5]{Fig1.eps} 
			\put(98,27){$x$}
			\put(20,51){$y$}
			\put(5,19){$B$}
			\put(45,33){$\ell^+$}
			\put(45,17){$\ell^-$}
			\put(73,23){$p$}
			\put(74.5,32.5){$Y(p)$}
			\put(52,27){$\theta$}
			\put(70,29){$\varphi$}
			\put(69.5,51){$\ell$}
		\end{overpic}
	\end{center}
	\caption{Illustration of $\ell^\pm$ and $\ell$.}\label{Fig1}
\end{figure}
Since $P_n(1,0)Q_n(1,0)\neq0$ 
it follows that,
	\[\lim\limits_{x\to\infty}\varphi(x)=\arctan\frac{Q_n(1,0)}{P_n(1,0)}\neq0.\]
As a consequence, $|\varphi(x)|>|\theta(x)|$ for $x>0$ big enough and thus $\ell\cap B=\emptyset$.
 \end{proof}

\begin{proof}[Proof of Theorem~\ref{Main1}] Suppose first $\mathcal{H}(n)<\infty$. It follows from Theorem~\ref{Main2}$(a)$ that there is $Z\in\Sigma_h^n$ such that $\pi(Z)=\mathcal{H}(n)$. Let $B\subset\mathbb{R}^2$ be a closed ball centered at the origin and such that all the limit cycles of $Z$ are in the interior of $B$. From Lemma~\ref{L1} and the structural stability of $Z$, we can suppose that $Z$ has a regular point $p\in\mathbb{R}^2\backslash B$ such that $p+sZ(p)\not\in B$ for every $s\in\mathbb{R}$. Let $Y=(P,Q)\in\Sigma_h^n$ be the vector field obtained from $Z$ by translating $p$ to the origin. Let $X=(R,S)\in\mathcal{X}^{n+1}$ be given by
	\[R(x,y)=(ax+by)P(x,y), \quad S(x,y)=(ax+by)Q(x,y),\]
with $a=-Q(0,0)$ and $b=P(0,0)$. Let $\ell\subset\mathbb{R}^2$ be the line given by $ax+by=0$ and observe that $X$ and $Y$ are equal on each connected component of $\mathbb{R}^2\backslash\ell$, except by the rescaling of time characterized by $dt/d\tau=ax+by$. It follows from Lemma~\ref{L1} that $B\cap\ell=\emptyset$ and thus $\pi_h(X)=\pi(X)=\pi(Y)$. Observe that $\ell$ is a line of singularities of $X$. In particular, the origin is a singularity of $X$ and its Jacobian matrix is given by,
	\[DX(0,0)=\left(\begin{array}{cc} aP(0,0) & bP(0,0) \vspace{0.2cm} \\ aQ(0,0) & bQ(0,0) \end{array}\right)=\left(\begin{array}{cc} ab & b^2 \vspace{0.2cm} \\ -a^2 & -ab \end{array}\right).\]
Hence, $\det DX(0,0)=0$ and $\text{Tr}\;DX(0,0)=0$. Let $X_{\varepsilon,\delta}=(R_\varepsilon,S_\delta)$ be given by
	\[R_\varepsilon(x,y)=(ax+(b+\varepsilon)y)P(x,y), \quad \quad S_\delta(x,y)=((a+\delta)x+by)Q(x,y),\]
and observe that we can take $|\varepsilon|>0$ and $|\delta|>0$ small enough such that the following statements hold.
\begin{enumerate}[label=(\roman*)]
	\item All the hyperbolic limit cycles inside $B$ persist.
	\item The origin is an isolated singularity.
	\item $\det DX_{\varepsilon,\delta}(0,0)>0$ and $\text{Tr}\;DX_{\varepsilon,\delta}(0,0)=0$.
\end{enumerate}
Hence, the origin is a monodromic singularity of $X_{\varepsilon,\delta}$. Let $L_1$ be its first \emph{Lyapunov constant} (see Adronov et al. \cite[p. 254]{Andronov}). Except perhaps by an arbitrarily small perturbation on the nonlinear terms of $X_{\varepsilon,\delta}$, we can suppose $L_1\neq0$. Therefore, we can take another small enough perturbation $W\in\mathcal{X}^{n+1}$ of $X_{\varepsilon,\delta}$ such that a limit cycle bifurcates from the origin, while the others persist. Hence we obtain
	\[\pi(W)\geqslant\pi(Y)+1=\mathcal{H}(n)+1,\]
and thus $\mathcal{H}(n+1)\geqslant\mathcal{H}(n)+1$. 

Suppose now $\mathcal{H}(n)=\infty$. It follows Theorem~\ref{Main2}$(b)$ that there is a sequence $(Z_k)$, with $Z_k\in\Sigma_h^n$, such that $\pi(Z_k)\to\infty$. Since $\pi(Z_k)<\infty$, we can apply the above reasoning on each $Z_k$ obtaining a sequence $(W_k)$, with $W_k\in\mathcal{X}^{n+1}$, such that $\pi(W_k)\to\infty$ and thus proving that $\mathcal{H}(n+1)=\infty$. \end{proof}

\section{Final remarks and a folklore result}\label{Sec4}

Theorem~\ref{Main1} is not the first known result about recurrence properties of $\mathcal{H}(n)$. It follows from the proof of  Christopher and Lloyd  \cite{ChrLlo1995} that $\mathcal{H}(2n+1)\geqslant4\mathcal{H}(n)$. Roughly speaking, given $X\in\mathcal{X}^n$, the authors translate all the limit cycles of $X$ to the first quadrant and thus apply the non-invertible transformation $(x,y)\mapsto(u^2,v^2)$, followed by the rescaling of time $dt/d\tau=2uv$. Hence, obtaining $Y\in\mathcal{X}^{2n+1}$ with a diffeomorphic copy of $X$ in each open quadrant.
	
The challenge of Theorem~\ref{Main1} has been to relate $\mathcal{H}(n+1)$ with $\mathcal{H}(n)$. It is much more easy for example to prove that $\mathcal{H}(n+2)\geqslant\mathcal{H}(n)+1$. Indeed, given $X\in\mathcal{X}^n$ let $Y=(x^2+y^2)X\in\mathcal{X}^{n+2}$ and observe that $Y$ is equivalent to $X$ except at the origin, where it has an extra degenerate singularity. Hence, similarly to the end of the proof of Theorem~\ref{Main1}, we can take a small perturbation of $Y$ creating an extra limit cycle.

We end this note with the following folklore result.

\begin{proposition}\label{Main3}
	Let $X$ be a planar analytic vector field. Then $X$ has an enumerable number of limit cycles. In particular, $\mathcal{H}(n)\leqslant\aleph_0$ for every $n\in\mathbb{N}$.
\end{proposition}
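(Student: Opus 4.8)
The plan is to show that the limit cycles of $X$ form a countable set by exhibiting an injection from the set of limit cycles into a countable set, using the fact that an analytic vector field has isolated periodic orbits among its limit cycles. First I would recall that each limit cycle $\gamma$ is a closed curve in $\mathbb{R}^2$, and by the Jordan curve theorem it encloses a bounded open region; moreover, since $\gamma$ is isolated as a periodic orbit (this is the content of being a \emph{limit} cycle), there is an annular neighborhood of $\gamma$ containing no other periodic orbit. The key is to associate to each limit cycle a rational datum in such a way that distinct limit cycles receive distinct data.

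The central observation I would exploit is that each limit cycle, being compact, must contain in its interior at least one singularity of $X$ (by the Poincar\'e--Bendixson index theory, a periodic orbit encircles singularities whose indices sum to $+1$, so the interior is nonempty of singularities). For an \emph{analytic} vector field that is not identically zero, the singularities either form an analytic set of dimension zero (isolated points, hence at most countably many, in fact locally finite) or the field vanishes on a curve; in the nondegenerate generic situation the singularities are isolated. I would then stratify the limit cycles according to the nest structure: limit cycles are pairwise either nested or disjoint (they cannot intersect), so they form a forest under inclusion of interiors. Each limit cycle can be tagged by a pick of a rational point inside the annular region separating it from the next limit cycle in its nest, and since these annular regions are disjoint open sets, the assignment $\gamma \mapsto (\text{rational point in its separating annulus})$ is injective into $\mathbb{Q}^2$, which is countable.

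The main obstacle I expect is handling the possibility that limit cycles accumulate. A priori an analytic vector field could have infinitely many limit cycles accumulating on a limit set (a polycycle, a singularity, or a larger cycle); the authors explicitly flag that the finiteness results of Il'yashenko and \'Ecalle are under discussion, so I cannot assume finiteness. Thus the delicate point is ensuring the separating-annulus construction still produces \emph{disjoint} open regions even in the accumulation regime. I would address this by noting that isolation gives each $\gamma$ its \emph{own} punctured annular neighborhood free of periodic orbits; shrinking these neighborhoods if necessary, I can choose for each $\gamma$ a small open disk $D_\gamma$ meeting $\gamma$ but no other limit cycle, and then pick a rational point in $D_\gamma \setminus \bigcup_{\gamma'\neq\gamma}\gamma'$. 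Disjointness of the resulting rational tags follows because a rational point lying arbitrarily close to $\gamma$ on one side cannot simultaneously be that close to a distinct isolated $\gamma'$. This yields an injection into $\mathbb{Q}^2$ and hence the countability claim; the bound $\mathcal{H}(n)\leqslant\aleph_0$ follows immediately since every polynomial vector field is analytic.
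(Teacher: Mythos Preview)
Your core idea---use the isolation of each limit cycle to assign it a rational point and thereby inject the set of limit cycles into $\mathbb{Q}^2$---is exactly the paper's approach. However, your execution leaves a gap at the decisive step, and the detours through Poincar\'e--Bendixson index theory and the nesting forest are never used and can be dropped.

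The gap is in the injectivity of your tagging. You choose for each $\gamma$ a disk $D_\gamma$ meeting $\gamma$ and no other limit cycle, then pick a rational point in $D_\gamma$. But nothing in your construction prevents $D_\gamma$ and $D_{\gamma'}$ from overlapping in a region that avoids both curves, so nothing forces distinct rational tags. The sentence ``a rational point lying arbitrarily close to $\gamma$ \ldots\ cannot simultaneously be that close to a distinct isolated $\gamma'$'' is not an argument: ``that close'' is undefined, and you have not tied the disk radii to the distances between limit cycles. The paper closes this gap cleanly by setting $\delta_a=\inf_{b\ne a} d(\gamma_a,\gamma_b)$, noting $\delta_a>0$ by isolation, and taking $N_a$ to be the open $\delta_a/2$-neighborhood of $\gamma_a$; a triangle-inequality check then gives $N_a\cap N_b=\emptyset$ whenever $a\ne b$, so any choice of rational point in each $N_a$ yields an injection. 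If you replace your unspecified ``shrinking if necessary'' by this explicit $\delta_a/2$ choice, your argument goes through, and the material on enclosed singularities and nest structure becomes superfluous.
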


\begin{proof} If $X$ has no limit cycles, then there is nothing to prove. Suppose therefore that $X$ has at least one limit cycle and let $\Gamma=\{\gamma_a\}_{a\in A}$ be an indexation of all its limit cycles, $A\neq\emptyset$. For each $a\in A$, set 
	\[\delta_a=\inf\{d(\gamma_a,\gamma_b)\colon b\in A,\;b\neq a\},\]
where $d(\gamma_a,\gamma_b)$ is the usual distance between the compact sets $\gamma_a$ and $\gamma_b$,
	\[d(\gamma_a,\gamma_b)=\min\{||q_a-q_b||\colon q_a\in\gamma_a,\;q_b\in\gamma_b\}.\]
Since $X$ is analytic, it follows that $\gamma_a$ must be isolated (see \cite[p. $217$]{Perko2001}) and thus $\delta_a>0$ for every $a\in A$. Let $N_a\subset\mathbb{R}^2$ be the open $\delta_a/2$-neighborhood of $\gamma_a$, $a\in A$. Observe that if $a\neq b$, then $N_a\cap N_b=\emptyset$ (for otherwise $d(\gamma_a,\gamma_b)<\max\{\delta_a,\delta_b\}$). For each $a\in A$, choose $r_a\in N_a\cap\mathbb{Q}^2$ and define $i(a)=r_a.$ Observe that $r_a\neq r_b$ if $a\neq b$. Hence, we have an injective map $i\colon A\to\mathbb{Q}^2$ and thus $A$ is enumerable. \end{proof}

Notice that Proposition~\ref{Main3} is optimal for the analytic case. For instance, the planar analytic vector field 
	\[\dot x=-y+x\sin(x^2+y^2), \quad \dot y=x+y\sin(x^2+y^2),\]
has infinitely many limit cycles, given by $x^2+y^2=k\pi$, with $k\in\mathbb{Z}_{>0}$.

\section*{Acknowledgments}

This work is supported by the Spanish State Research Agency, through the projects PID2022-136613NB-I00 grant and the Severo Ochoa and Mar\'ia de Maeztu Program for Centers and Units of Excellence in R\&D (CEX2020-001084-M),  grant 2021-SGR-00113 from AGAUR, Generalitat de Ca\-ta\-lu\-nya, and by S\~ao Paulo Research Foundation (FAPESP), grants 2019/10269-3, 2021/01799-9 and 2022/14353-1.

\end{document}